\documentclass[10pt]{article}
\font\smallit=cmti10

\usepackage{enumitem}
\usepackage{amssymb,latexsym,amsmath,epsfig,amsthm} 
\makeatletter

\renewcommand\section{\@startsection {section}{1}{\z@}
{-30pt \@plus -1ex \@minus -.2ex}
{2.3ex \@plus.2ex}
{\normalfont\normalsize\bfseries\boldmath}}

\renewcommand\subsection{\@startsection{subsection}{2}{\z@}
{-3.25ex\@plus -1ex \@minus -.2ex}
{1.5ex \@plus .2ex}
{\normalfont\normalsize\bfseries\boldmath}}

\renewcommand{\@seccntformat}[1]{\csname the#1\endcsname. }

\newcommand\blfootnote[1]{
\begingroup
\renewcommand\thefootnote{}\footnote{#1}
\addtocounter{footnote}{-1}
\endgroup
}

\makeatother

\newtheorem{theorem}{Theorem}
\newtheorem{lemma}{Lemma}
\newtheorem{proposition}{Proposition}
\newtheorem{corollary}{Corollary}

\theoremstyle{definition}

\newtheorem{remark}{Remark}
\newtheorem{example}{Example}

\begin{document}
\begin{figure}
\vspace*{-62pt}

\vspace{.75in}
\end{figure}
\vspace*{-45pt}

\vskip 3pt 	

%\title{A Prime-Power Dichotomy for Nekrasov--Okounkov Hook Lengths and $t$-Core Partitions}
\begin{center}
\uppercase{\bf \boldmath A Prime-Power Dichotomy for Nekrasov--Okounkov Hook Lengths  and} {$t$\uppercase{\bf \boldmath-Core Partitions }}
\vskip 20pt

{\bf Meenu Sharma \blfootnote{%
  2020 Mathematics Subject Classification: Primary 11P83, 11P84; Secondary 05A17, 11F33.\\
  Keywords: Nekrasov--Okounkov hook length formula, $t$-core partitions, partition congruences, hook lengths, formal power series, prime powers.}}\\
{\smallit Department of Mathematics,  The University of Hong Kong, Pokfulam, Hong Kong}\\ 
{\tt sharmameenuphd@gmail.com}\\ 
\end{center}

\vskip 30pt

\centerline{\bf Abstract}
\noindent
Let $a_t(n)$ denote the coefficients of the Nekrasov--Okounkov hook length generating function $F_{t^2}(x)$, and let $b_t(n)$ denote the coefficients of the $t$-core partition generating function $C_t(x)$. We prove a sharp prime-power dichotomy: for every integer $t\ge 2$, the congruence
$$
a_t(n)\equiv b_t(n)\pmod t
$$
holds for all $n\ge 0$ if and only if $t$ is a prime power. When $t=p$ is prime, the congruence strengthens to modulo $p^2$. For $t=6$, and more generally for every integer that is not a prime power, the congruence fails. This identifies prime powers as the exact moduli for which these two partition-theoretic coefficient sequences are arithmetically equivalent.\\
\pagestyle{myheadings}

\thispagestyle{empty}
\baselineskip=12.875pt
\vskip 30pt

\section{Introduction and Statement of Results}
The arithmetic and combinatorial properties of integer partitions and their
hook lengths have long played a central role at the intersection of algebraic
combinatorics, representation theory, and modular forms. For a partition
$\lambda$, its Young diagram consists of $|\lambda|$ cells, each equipped
with a positive integer known as its \emph{hook length} $h$. In 2006,
Nekrasov and Okounkov \cite{Nek-Oko} introduced a foundational identity
expressing complex powers of the Euler product $\prod_{n=1}^{\infty}(1-x^n)$
as a multiset product over partition hook lengths. For a partition $\lambda$,
denote by $|\lambda|$ its size and by $\mathcal{H}(\lambda)$ the multiset of
hook lengths of its Young diagram. Define
\begin{equation}\label{hooklenth}
F_z(x) := \sum_{\lambda} x^{|\lambda|}
\prod_{h\in \mathcal{H}(\lambda)}\left( 1-\frac{z}{h^2}\right)
= \prod_{n=1}^{\infty}(1-x^n)^{z-1},
\end{equation}
where the outer sum ranges over all integer partitions $\lambda$ and
$z\in \mathbb{C}$ is an arbitrary parameter. Han \cite{GNHan} subsequently
provided a remarkable elementary combinatorial proof of \eqref{hooklenth}.

Concurrently, $t$-core partitions (i.e., partitions containing no hook
lengths divisible by $t$) are fundamental structures in representation
theory. Under Nakayama's conjecture, $p$-cores label the $p$-blocks of
symmetric group representations over fields of characteristic $p$.
Garvan, Kim, and Stanton \cite{FDD} extensively developed the combinatorial
properties and generating functions of these $t$-cores. The generating
function for the number $b_t(n)$ of $t$-core partitions of $n$ is given
by the product quotient
\begin{equation}
C_t(x) = \prod_{n=1}^{\infty} \frac{(1-x^{tn})^t}{1-x^n}
= \sum_{n=0}^{\infty} b_t(n)\, x^n.
\end{equation}
The arithmetic of $b_t(n)$ was revolutionized by Granville and Ono
\cite{Granville-Ono}, who proved the $t$-core partition conjecture---for every
$t\ge 4$, every nonnegative integer $n$ possesses at least one $t$-core
partition, i.e. $b_t(n)>0$ and established the Ramanujan-type congruences
$$
b_5(5n+4)\equiv 0\pmod 5,\qquad
b_7(7n+5)\equiv 0\pmod 7,\qquad
b_{11}(11n+6)\equiv 0\pmod{11}.
$$
These results played a key role in the classification of defect-zero
$p$-blocks of finite simple groups.

In 2011, Han and Ono \cite{Han-Ono} forged a striking connection between these
two domains by evaluating the Nekrasov--Okounkov series at $z=3^2=9$.
Letting $F_9(x)=\sum_{n=0}^{\infty} a_3(n)x^n$, they observed that $a_3(n)$
vanishes if and only if the $3$-core partition function $b_3(n)$ vanishes.
Using deep analytic results on weight $4$ modular forms on $\Gamma_0(9)$
with complex multiplication by $\mathbb{Q}(\sqrt{-3})$, they proved the
qualitative zero-equivalence theorem
$$
a_3(n)=0 \iff b_3(n)=0 \qquad (n\ge 0).
$$

In this paper we go beyond zero-equivalence and ask: do the coefficients
$a_t(n)$ of $F_{t^2}(x)$ and $b_t(n)$ of $C_t(x)$ satisfy arithmetic
congruences modulo $t$? Does such a relationship hold for prime $t=p$,
for prime powers $t=p^a$, or for general composite integers $t$? And can
these arithmetic relations be established via elementary $p$-adic formal
power series methods, independently of CM modular form machinery? Our main
findings are summarized as follows.

Our main result is the following complete characterization.

\begin{theorem}[Complete Characterization]\label{compositecase}
Let $t\ge 2$ be a positive integer. The following are equivalent:
\begin{enumerate}[label=(\roman*)]
    \item $a_t(n)\equiv b_t(n)\pmod t$ for all $n\ge 0$;
    \item $t$ is a prime power.
\end{enumerate}
\end{theorem}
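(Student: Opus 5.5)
The plan is to reduce the congruence between $a_t(n)$ and $b_t(n)$ to a congruence between two explicit products, and then treat the two directions by elementary $p$-adic arguments. Set
$$
P(x):=\prod_{n\ge1}(1-x^n)^{t^2},\qquad Q(x):=\prod_{n\ge1}(1-x^{tn})^{t}.
$$
Then $F_{t^2}(x)=P(x)/\prod(1-x^n)$ and $C_t(x)=Q(x)/\prod(1-x^n)$, so
$$
F_{t^2}(x)-C_t(x)=\frac{P(x)-Q(x)}{\prod_{n\ge1}(1-x^n)}.
$$
Since $\prod(1-x^n)^{\pm1}$ is a unit in $\mathbb{Z}[[x]]$ with constant term $1$, condition (i) is equivalent to $P\equiv Q\pmod t$ coefficientwise. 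Moreover, for any prime $p$, the lowest-degree coefficient of $F_{t^2}-C_t$ that is nonzero mod $p$ sits in the same degree as, and agrees mod $p$ with, the lowest such coefficient of $P-Q$.

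\textbf{(ii)$\Rightarrow$(i).} Let $t=p^a$. I would use the standard lifting fact: if $f\equiv g\pmod{p^k}$ in $\mathbb{Z}[[x]]$ with $k\ge1$, then $f^p\equiv g^p\pmod{p^{k+1}}$. This follows by writing $f=g+p^k h$ and expanding binomially. From the Frobenius congruence $(1-x^n)^{p^a}\equiv 1-x^{np^a}\pmod p$, applying the lifting fact $a$ times (raising to the power $p^a$) gives
$$
(1-x^n)^{p^{2a}}\equiv (1-x^{p^a n})^{p^a}\pmod{p^{a+1}}.
$$
Taking the product over $n$ yields $P\equiv Q\pmod{p^{a+1}}$, and hence in particular mod $t$. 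For $a=1$ this also gives the stated mod $p^2$ strengthening.

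\textbf{(i)$\Rightarrow$(ii).} Suppose $t$ is not a prime power. Choose a prime $p\mid t$ and write $t=p^a m$ with $m>1$ and $p\nmid m$. Working mod $p$, Frobenius gives:

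- $(1-x^n)^{t^2}=\bigl((1-x^n)^{p^{2a}}\bigr)^{m^2}\equiv (1-x^{np^{2a}})^{m^2}$, so $P\equiv 1-m^2x^{p^{2a}}+(\text{higher terms})\pmod p$;
- every factor of $Q$ is a power series in $x^{t}$, and mod $p$ it is even a series in $x^{tp^a}=x^{p^{2a}m}$, whose smallest positive exponent is $p^{2a}m>p^{2a}$.

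Hence the coefficient of $x^{p^{2a}}$ in $P-Q$ is $\equiv -m^2\not\equiv 0\pmod p$, and all lower coefficients vanish mod $p$. By the unit remark above,
$$
a_t(p^{2a})-b_t(p^{2a})\equiv -m^2\not\equiv0\pmod p,
$$
so the congruence fails mod $p$ and hence mod $t$. For $t=6$ with $p=2$ this predicts failure at $n=4$; I would check this numerically as a sanity check.

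The main point requiring care is the bookkeeping in the second direction. One must confirm that no lower-order terms of $P$ survive mod $p$, and that multiplying by the unit $\prod(1-x^n)^{-1}$ does not disturb the first nonzero coefficient mod $p$. Both hold because the unit has constant term $1$. The lifting lemma in the first direction is routine.
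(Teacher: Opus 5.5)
Your proof is correct and has the same skeleton as the paper's: Frobenius congruences plus binomial lifting for (ii)$\Rightarrow$(i), and reduction modulo a prime $p\mid t$ followed by inspection of the coefficient of $x^{p^{2a}}$ for (i)$\Rightarrow$(ii). The execution differs in ways that help you.

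For (ii)$\Rightarrow$(i), the paper treats $t=p$ separately and then proves a two-index Lifting Lemma $B_k^{p^{2j}}\equiv B_{k+j}^{p^j}\pmod{p^j}$ by induction, which gives only $F_{t^2}\equiv C_t\pmod{p^a}$. You first iterate Frobenius factorwise to get $(1-x^n)^{p^a}\equiv 1-x^{np^a}\pmod p$, then apply the standard lift $f\equiv g\pmod{p^k}\Rightarrow f^p\equiv g^p\pmod{p^{k+1}}$ exactly $a$ times. This is shorter and gives $F_{t^2}\equiv C_t\pmod{p^{a+1}}$ for every prime power. Proposition 1 is then the case $a=1$, and Proposition 2 gains one power of $p$ when $a\ge 2$. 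The exponent $a+1$ cannot be raised in general: for $t=2$ and $t=4$ the differences $a_t(1)-b_t(1)=-4$ and $a_t(2)-b_t(2)=88$ are not divisible by $p^{a+2}$.

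For (i)$\Rightarrow$(ii), the paper substitutes $y=x^{p^{2a}}$, cancels an $m$-th power using the pro-$p$ injectivity Lemma 3, and compares $-m$ with $0$ at $y^1$. You compare $-m^2$ with $0$ directly, so Lemma 3 is never needed. That is a genuine gain: the paper justifies Lemma 3 with a logarithm in characteristic $p$ and a claimed isomorphism with the exponent-$p$ group $(y\mathbb{F}_p[[y]],+)$, which does not work as written, although the statement itself is true. Your unit remark also gives the exact value $a_t(p^{2a})-b_t(p^{2a})\equiv -m^2\not\equiv 0\pmod p$. This proves the paper's Corollary 2, which the paper states without an argument.
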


The proof of Theorem~\ref{compositecase} is divided into two independent
directions. The implication $(\mathrm{ii})\Rightarrow(\mathrm{i})$ is
established by the following two results, which treat the prime case and
then extend it to arbitrary prime powers.

\begin{proposition}\label{Primecase}
Let $t=p$ be any prime. Then for all $n\ge 0$,
$$
a_p(n)\equiv b_p(n)\pmod{p^2}.
$$
In particular, $a_p(n)\equiv b_p(n)\pmod p$.
\end{proposition}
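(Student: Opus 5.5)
The plan is to compare the two generating functions directly through their quotient and show that the quotient is congruent to $1$ modulo $p^2$ as a formal power series in $\mathbb{Z}[[x]]$. By \eqref{hooklenth} with $z=p^2$ we have $F_{p^2}(x)=\prod_{n\ge1}(1-x^n)^{p^2-1}$. Dividing by $C_p(x)$ gives
\begin{equation*}
\frac{F_{p^2}(x)}{C_p(x)}=\prod_{n\ge 1}\frac{(1-x^n)^{p^2}}{(1-x^{pn})^{p}} .
\end{equation*}
All series involved lie in $\mathbb{Z}[[x]]$ with constant term $1$, so they are units there. Hence a congruence $F_{p^2}\equiv C_p \pmod{p^2}$ coefficientwise is equivalent to
\begin{equation*}
\prod_{n\ge1}(1-x^n)^{p^2}\equiv \prod_{n\ge1}(1-x^{pn})^{p}\pmod{p^2\mathbb{Z}[[x]]}.
\end{equation*}
Multiplying this by the integral unit $\prod_{n\ge 1}(1-x^{pn})^{-p}\cdot C_p(x)$ then recovers $a_p(n)\equiv b_p(n)\pmod{p^2}$.

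The key step is a factorwise lifting lemma: for any $y$ (here $y=x^n$), $(1-y)^{p^2}\equiv (1-y^p)^p \pmod{p^2}$ in $\mathbb{Z}[y]$. First, since $p\mid\binom{p}{k}$ for $0<k<p$, and the sign works out also for $p=2$ because $y^2\equiv -y^2 \pmod 2$, we can write $(1-y)^p=(1-y^p)+p\,g(y)$ with $g\in\mathbb{Z}[y]$. Raising to the $p$-th power and expanding binomially gives
\begin{equation*}
(1-y)^{p^2}=(1-y^p)^p+\sum_{k=1}^{p}\binom{p}{k}p^k g^k(1-y^p)^{p-k}.
\end{equation*}
Each term with $k\ge1$ is divisible by $p^2$: for $1\le k<p$ the factor $p$ comes from $\binom pk$ and another from $p^k$, while for $k=p$ we have $p^p$ with $p\ge2$. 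This is the standard ``congruences lift one power of $p$ under $p$-th powers'' principle.

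Finally, pass to the infinite product. Fix $N$ and work modulo $x^{N}$, where only the finitely many factors with $n<N$ matter. The product of the factorwise congruences modulo the ideal $p^2\mathbb{Z}[x]$ is again a congruence, so the two truncated products agree modulo $p^2$ for every $N$. This yields the displayed congruence and hence the proposition. The mod $p$ statement follows trivially.

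I expect no serious obstacle. The only care points are the $p=2$ sign check in $(1-y)^p\equiv 1-y^p$ and justifying that dividing by the series $\prod_{n\ge1}(1-x^{pn})^p$ preserves congruences. The latter holds because its inverse has integer coefficients.
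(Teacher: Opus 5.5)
Your proof is correct and follows the paper's route: a mod-$p$ Frobenius congruence, the binomial lift $(B+pR)^p\equiv B^p \pmod{p^2}$, and division by the unit $\prod_{n\ge1}(1-x^n)$. The only difference is cosmetic: you do the lift factor by factor in $\mathbb{Z}[y]$ and pass to the infinite product by truncating modulo $x^N$, whereas the paper applies Lemma~\ref{Freshman} and the lift to the whole series $A(x)$ at once.
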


\begin{proposition}[Prime Power Case]\label{Primepowercase}
Let $t=p^a$ be any prime power with $a\ge 1$. Then for all $n\ge 0$,
$$
a_t(n)\equiv b_t(n)\pmod{p^a}.
$$
\end{proposition}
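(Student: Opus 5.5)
The plan is to compare the two generating functions directly as formal power series in $\mathbb{Z}[[x]]$, working with their quotient rather than with individual coefficients. From \eqref{hooklenth} with $z=t^2$ and from the product formula for $C_t(x)$ we get
$$
F_{t^2}(x)=\prod_{n\ge1}(1-x^n)^{t^2-1}=C_t(x)\cdot G_t(x),\qquad
G_t(x):=\prod_{n\ge1}\frac{(1-x^n)^{t^2}}{(1-x^{tn})^{t}} .
$$
It is convenient to rewrite this as $G_t(x)=\prod_{n\ge1}\bigl((1-x^n)^{t}/(1-x^{tn})\bigr)^{t}$. Since $C_t(x)$ has integer coefficients, it suffices to show that $G_t(x)\in 1+p^{a}\mathbb{Z}[[x]]$ when $t=p^a$. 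Then $F_{t^2}-C_t=C_t\,(G_t-1)$ has all coefficients divisible by $p^a$, and comparing coefficients gives $a_t(n)\equiv b_t(n)\pmod{p^a}$.

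The key tool is the standard lifting lemma. If $f,g\in\mathbb{Z}[[x]]$ satisfy $f\equiv g\pmod{p^k}$ with $k\ge1$, then $f^p\equiv g^p\pmod{p^{k+1}}$. To see this, write $f=g+p^k h$ and expand binomially: every term except $g^p$ contains either a factor $p\cdot p^k$ or a factor $p^{kp}$ with $kp\ge k+1$. Iterating the lemma, $f\equiv g\pmod p$ implies $f^{p^m}\equiv g^{p^m}\pmod{p^{m+1}}$.

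We apply this as follows.
\begin{enumerate}
\item By Frobenius in $\mathbb{F}_p[[x]]$ we have $(1-x^n)^{p^a}\equiv 1-x^{p^a n}\pmod p$, that is, $(1-x^n)^t\equiv 1-x^{tn}\pmod p$.
\item Raising both sides to the power $t=p^a$ and using the iterated lemma with $m=a$ gives
$$
(1-x^n)^{t^2}\equiv (1-x^{tn})^{t}\pmod{p^{a+1}}.
$$
\item Since $(1-x^{tn})^t$ has constant term $1$, it is a unit in $\mathbb{Z}[[x]]$, and its inverse again has integer coefficients. Multiplying the congruence by this inverse shows that each factor of $G_t$ lies in $1+p^{a+1}\mathbb{Z}[[x]]$.
\item The $n$-th factor is moreover $\equiv 1 \pmod{x^n}$. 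Hence the infinite product converges $x$-adically, and every coefficient of $G_t$ is determined by finitely many factors.
\item The set $1+p^{a+1}\mathbb{Z}[[x]]$ is closed under multiplication and under such $x$-adic limits. Therefore $G_t\equiv1\pmod{p^{a+1}}$.
\end{enumerate}

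The conclusion is $a_t(n)\equiv b_t(n)\pmod{p^{a+1}}$, which is stronger than required. For $a=1$ it recovers the modulus $p^2$ of Proposition~\ref{Primecase}, and for general $a$ it certainly gives the stated congruence modulo $p^a$.

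I expect no serious obstacle. The only points needing care are the bookkeeping in the lifting lemma, namely checking that $kp\ge k+1$ and that the middle binomial coefficients are divisible by $p$, and the justification of passing the congruence through the infinite product. Both are routine. The one place the hypothesis that $t$ is a prime power enters is Step 1, the Frobenius identity $(1-x^n)^t\equiv 1-x^{tn}\pmod p$, which requires the exponent to be a power of a single prime $p$. This is consistent with the failure of the congruence for other $t$ asserted in Theorem~\ref{compositecase}.
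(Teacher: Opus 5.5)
Your proof is correct, and it takes a genuinely different and more economical route than the paper.

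Both arguments compare $A^{t^2}$ with $B_a^{t}$, where $A=\prod_{n\ge1}(1-x^n)$ and $B_a=\prod_{n\ge1}(1-x^{tn})$, and then divide by the unit $A$; your $G_t$ is exactly $A^{t^2}/B_a^{t}$. The difference is the key lemma used to make that comparison.

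\begin{itemize}
\item The paper uses a two-index Lifting Lemma, $B_k^{p^{2j}}\equiv B_{k+j}^{p^j}\pmod{p^j}$. It is proved by an induction in which each step raises to the $p^2$-th power and then re-applies the base case one level down. Since the induction only tracks the modulus $p^j$, taking $k=0$, $j=a$ yields precisely the stated congruence modulo $p^a$.
\item You apply Frobenius $a$ times first, obtaining $(1-x^n)^t\equiv 1-x^{tn}\pmod p$. Only then do you raise to the $t$-th power, using the standard fact that $f\equiv g\pmod{p^k}$ implies $f^p\equiv g^p\pmod{p^{k+1}}$. This gains one power of $p$ per $p$-th power taken.
\end{itemize}

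Your route gives the stronger modulus $p^{a+1}$ for every $a$. It contains Proposition~\ref{Primecase} as the case $a=1$, so one argument covers both propositions. It also confirms the higher $p$-adic lift that the paper only speculates about in its numerical section. Moreover, $p^{a+1}$ cannot be replaced by $p^{a+2}$ in general: for $t=4$ one has $a_4(2)-b_4(2)=90-2=2^3\cdot 11$.

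Your factor-by-factor passage through $G_t$ and the $x$-adic closure argument in Steps 3--5 are valid but not needed. Lemma~\ref{Freshman}, iterated, gives $A^{p^a}\equiv B_a\pmod p$ for the whole series at once. The same lifting then gives $A^{t^2}\equiv B_a^{t}\pmod{p^{a+1}}$ directly, and you finish by dividing by $A$ as the paper does.
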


Propositions~\ref{Primecase} and~\ref{Primepowercase} together prove the
direction $(\mathrm{ii})\Rightarrow(\mathrm{i})$ of
Theorem~\ref{compositecase}. The converse direction
$(\mathrm{i})\Rightarrow(\mathrm{ii})$ is established independently in
Section~\ref{sec:failure}, where we show that the congruence fails for every
integer that is not a prime power.
\section{Proofs of Propositions \ref{Primecase} and \ref{Primepowercase}}
\begin{lemma}[Freshman's Dream for Formal Power Series]\label{Freshman}
 Let $p$  be a prime and $f(x) \in \mathbb{Z}[[x]]$  with $f(0) =1$. Then
 $$f(x)^p \equiv f(x^p) \pmod p \quad \text{in}\; \mathbb{Z}[[x]].$$
\end{lemma}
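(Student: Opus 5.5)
The plan is to reduce to the ring $\mathbb{F}_p[[x]]$, where the Frobenius map $g\mapsto g^p$ is a ring endomorphism. Write $\bar f(x)=\sum_{n\ge0} \bar c_n x^n\in\mathbb{F}_p[[x]]$ for the reduction of $f$ modulo $p$. The claim $f(x)^p\equiv f(x^p)\pmod p$ says precisely that $\bar f^{\,p}=\bar f(x^p)$ in $\mathbb{F}_p[[x]]$. Reduction modulo $p$ is a ring homomorphism $\mathbb{Z}[[x]]\to\mathbb{F}_p[[x]]$, so $\overline{f^p}=\bar f^{\,p}$, and it suffices to prove the identity there. The hypothesis $f(0)=1$ will not actually be needed; it only guarantees that the statement can later be applied to infinite products such as $\prod(1-x^n)$.

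The key step is to show that $g\mapsto g^p$ is additive on $\mathbb{F}_p[[x]]$. In any commutative ring of characteristic $p$, the binomial theorem gives
\[
(u+v)^p=\sum_{k=0}^{p}\binom{p}{k}u^kv^{p-k}=u^p+v^p,
\]
because $p\mid\binom pk$ for $0<k<p$. Indeed, $p$ divides $p!$ but does not divide $k!(p-k)!$.

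To apply this to an infinite sum, truncate. Fix $N$ and write $\bar f=P_N+x^{N+1}R_N$, where $P_N=\sum_{n\le N}\bar c_nx^n$ is a polynomial. By induction on the number of terms, additivity gives
\[
P_N^{\,p}=\sum_{n\le N}\bar c_n^{\,p}x^{np}=\sum_{n\le N}\bar c_nx^{np}.
\]
The second equality uses Fermat's little theorem, $c^p=c$ for $c\in\mathbb{F}_p$. Applying additivity once more,
\[
\bar f^{\,p}=P_N^{\,p}+x^{p(N+1)}R_N^{\,p},
\]
so $\bar f^{\,p}$ and $\bar f(x^p)$ agree in every coefficient of $x^m$ with $m<p(N+1)$. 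Since $N$ is arbitrary, $\bar f^{\,p}=\bar f(x^p)$, which is the claim.

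The only step requiring care is this passage from finite sums to formal power series. The truncation argument handles it cleanly. Alternatively, one can note that the coefficient of $x^m$ in $f^p$ depends only on finitely many of the $c_n$.
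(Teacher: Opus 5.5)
Your proof is correct and takes essentially the same route as the paper's. Both arguments use the fact that $p$ divides every non-trivial (multi)nomial coefficient, so that only the pure $p$th powers $c_k^p x^{pk}$ survive, and then apply Fermat's little theorem. Your truncation argument makes rigorous the passage to infinite sums, which the paper's multinomial expansion leaves implicit, and you are right that the hypothesis $f(0)=1$ is not needed.
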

\begin{proof}
    Write $f(x) = 1 + \sum_{k\geq 1} c_k x^k$. By the multinomial theorem, all cross terms in $f(x)^p$ carry a binomial coefficient $\binom{p}{m}$, where $m$ is a multi-index with $|m| = p$. For $p$ prime, $p\mid \binom{p}{m}$ in all such cases. The surviving terms are $\sum_{k}c_k^p x^{pk} \equiv \sum_{k} c_k x^{pk} \equiv f(x^p) \pmod{p}$, where $c_k^p \equiv c_k \pmod p$ by Fermat's little theorem.
\end{proof}
\begin{corollary}\label{Coro}
    Set $A(x) = \prod_{n\geq 1} (1-x^n)$ and $B_k(x) = \prod_{n\geq 1} (1-x^{np^k})$ for $k\geq 0$. Then for each $k\geq 0$,
    $$B_k(x)^p \equiv B_{k+1}(x) \pmod p.$$
\end{corollary}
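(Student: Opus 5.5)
The plan is to apply Lemma~\ref{Freshman} directly with $f=B_k$, after checking that $B_k$ satisfies its hypotheses. First, $B_k(x)=\prod_{n\ge1}(1-x^{np^k})$ lies in $\mathbb{Z}[[x]]$. It is a well-defined formal power series because each factor $1-x^{np^k}$ differs from $1$ only in degree $np^k\ge n$. Hence any coefficient of $x^N$ depends only on the finitely many factors with $np^k\le N$, and has integer coefficients. Moreover $B_k(0)=1$. So Lemma~\ref{Freshman} gives
$$B_k(x)^p\equiv B_k(x^p)\pmod p .$$

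Next, identify $B_k(x^p)$. Substituting $x\mapsto x^p$ into the product gives
$$B_k(x^p)=\prod_{n\ge1}\bigl(1-(x^{p})^{np^k}\bigr)=\prod_{n\ge1}\bigl(1-x^{np^{k+1}}\bigr)=B_{k+1}(x).$$
This substitution is legitimate factorwise. The map $g(x)\mapsto g(x^p)$ is a continuous ring endomorphism of $\mathbb{Z}[[x]]$ in the $x$-adic topology, so it commutes with the convergent infinite product. Combining the two displays yields $B_k(x)^p\equiv B_{k+1}(x)\pmod p$. (As a side remark, $A=B_0$, so the case $k=0$ reads $A(x)^p\equiv A(x^p)\pmod p$.)

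No step is a genuine obstacle. The only point needing care is to justify the manipulations with infinite products: the product converges, it has integer coefficients, and substitution commutes with it. Each of these follows from a truncation argument modulo $x^{N+1}$. Working modulo $x^{N+1}$, only finitely many factors matter, the lemma applies to the resulting polynomial, and one then lets $N\to\infty$.
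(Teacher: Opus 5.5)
Your proposal is correct and is the argument the paper intends. The paper states this corollary without proof, as an immediate consequence of Lemma~\ref{Freshman} applied to $f=B_k$ together with the identity $B_k(x^p)=B_{k+1}(x)$; your added justification of the infinite products and of substitution via truncation modulo $x^{N+1}$ is sound, though the paper does not spell it out.
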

We now prove Proposition \ref{Primecase} using Lemma \ref{Freshman}.
\begin{proof}[Proof of Proposition \ref{Primecase}]
Let $p$ be a prime, write $A = A(x)$ and $B = B_1(x) = \prod_{n\geq 1}(1-x^{np})$. By Lemma \ref{Freshman}, we have 
$$A^p \equiv B \pmod p.$$
Write $A^p = B+p.R$,  for some $R\in Z[[x]]$. So we have 
\begin{align*}
 A^{p^2} &= (A^p)^p =(B+pR)^p = \sum_{k=0}^{p}\binom{p}{k}B^{p-k}(pR)^k \\  &= B^p +p.B^{p-1}.pR +\binom{p}{2}(pR)^2B^{p-2}+...
\end{align*}
For $k =1$, the coefficient is $\binom{p}{1}p =p^2$. For $k\geq 2,$ the coefficient contains $p^k\geq p^2.$ Therefore, every term with $k\geq 1$ is divisible by $p^2$, giving
\begin{equation}\label{APBP}
 A^{p^2} \equiv B^p \pmod{p^2}.   
\end{equation}

We observe that 
\begin{equation}\label{FPCP}
 F_{p^2}(x) = A^{p^2 -1} = \frac{A^{p^2}}{A}, \quad C_p(x) = \frac{B_p}{A} =\frac{\prod_{n}(1-x^{np})^p}{\prod_n (1-x^n)}.   
\end{equation}

Since $A^{p^2}\equiv B^p \pmod{p^2}$ in $\mathbb{Z}[[x]]$ and $A$ has constant term $1$ (so division is well-defined in $\mathbb{Z}[[x]]$).
Using Equations \eqref{APBP} and \eqref{FPCP}, we get 
\begin{equation}\label{FPP}
    F_{p^2}(x) =\frac{A^{p^2}}{A} \equiv \frac{B^p}{A} = C_p(X) \pmod{p^2}.
\end{equation}
After comparing coefficients, we have 
$$a_p(n) \equiv b_p(n) \pmod {p^2} \quad \text{for all} \; n\geq 0.$$
\end{proof}
\begin{remark}
    For prime $t=p$, the congruence holds for $p^2$, which is stronger than our stated $\pmod{p}$. Verified numerically for $p=2,3,5.$
\end{remark}

We need the following lemma to prove Proposition \ref{Primepowercase}.
\begin{lemma}[Lifting Lemma] With $B_k$ as above, for all $k\geq 0$ and $j\geq 1$, we have 
$$B_k^{p^{2j}} \equiv B_{k+j}^{p^j} \pmod{p^j} $$
\begin{proof}
    By induction on $j$.\\
    \textit{Base case:} For $j=1$, we need $B_k^{p^2} \equiv B_{k+1}^p \pmod p.$ By Corollary \ref{Coro}, we know that  $B_{k}^p\equiv B_{k+1} \pmod p.$ Write $B_k^{p} = B_{k+1} + pS.$ Then 
    $$B_k^{p^2} = (B_k^p)^p = (B_{k+1} +pS)^p \equiv B_{k+1}^p \pmod {p^2}$$
    and reducing modulo $p$ gives $B_{k}^{p^2} \equiv B_{k+1}^p \pmod{p}.$\\
    \textit{Inductive step:} Assume $B_{k}^{p^{2j}} \equiv B_{k+j}^{p^j} \pmod{p^j}$. Write $B_{k}^{p^{2j}} = B_{k+j}^{p^j} +p^jU.$
    Then
    $$B_k^{p^{2(j+1)}} = (B_{k+j}^{p^j} +p^j U)^{p^2} = \sum_{l=0}^{p^2} \binom{p^2}{l} (B_{k+j}^{p^j})^{p^2 -l} (p^j U)^l.$$
    The $l=0$ term is $B_{k+j}^{p^{j+2}}$. For $l\geq 1,$ $\binom{p^2}{l}p^{jl} \geq p^{j+1}.$ Hence $B_k^{p^{2(j+1)}} \equiv B_{k+j}^{p^{j+2}} \pmod {p^{j+1}}.$
    The base case proof, applied to $B_{k+j}$ in place of $B_k$, establishes the stronger congruence $B_{k+j}^{p^2} \equiv B_{k+j+1}^p \pmod {p^2}$; write $B_{k+j}^{p^2} = B_{k+j +1}^{p} +p^2 W$ for some $ W\in \mathbb{Z}[[x]].$ Then
    \begin{align*}
       B_{k+j}^{p^{j+2}} &= (B_{k+j}^{p^2})^{p^j} = (B_{k+j+1}^{p} +p^2 W)^{p^j} \\
       & B_{k+j+1}^{p^{j+1}} +\binom{p^j}{1} (B_{k+j+1})^{p^j-1}.p^2 W+...
    \end{align*}
    The $l=1$ cross-term carries $p^j.p^2 =p^{j+2}$, all higher terms carry at least $p^{2j} \geq p^{j+1}$ (for $j\geq 1)$. Hence $B_{k+j}^{p^{j+2}} \equiv B_{k+j+1}^{p^{j+1}} \pmod{p^{j+1}}.$
   
\end{proof}
    
\end{lemma}

\begin{proof}[Proof of Proposition \ref{Primepowercase} ]
Apply the lifting Lemma with $k=0$ and $j=a$   
$$A^{p^{2a}} \equiv B_a^{p^a} \pmod{p^a}. $$
Here $B_a = \prod_{n} (1-x^{np^a}) = \prod_{n} (1-x^{nt})$ and $p^a = t,$ so $ B_a^{p^a} = \left(\prod_{n}(1-x^{tn})\right)^t.$ Dividing by $A,$
$$F_{t^2}(x) = \frac{A^{t^2}}{A} \equiv \frac{B_a^{t}}{A} = C_t(x) \pmod{t}.$$
Hence $a_t(n) \equiv b_t(n) \pmod{t}$ for all $n\geq 0.$
\end{proof}

\section{Failure for Non-Prime-Powers}\label{sec:failure}
To prove the converse direction of Theorem~\ref{compositecase}, we establish the following lemma.

\begin{lemma}\label{Pro-p}
Let $p$ be a prime and $m\geq 1$ with $\gcd(m,p) = 1$. Then the map $f\to f^m$  is injective on $1 +y \mathbb{F}_p[[y]]$. 
\end{lemma}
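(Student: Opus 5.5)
Suppose $f,g\in 1+y\mathbb{F}_p[[y]]$ satisfy $f^m=g^m$. The plan is to show that $h:=fg^{-1}$ equals $1$. The quotient is well defined, since $g$ has constant term $1$ and is therefore invertible in $\mathbb{F}_p[[y]]$. Also $h\in 1+y\mathbb{F}_p[[y]]$, and $h^m=1$ because $\mathbb{F}_p[[y]]$ is commutative. So injectivity reduces to showing that the group $1+y\mathbb{F}_p[[y]]$ has no nontrivial elements of order prime to $p$.

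I would argue by the lowest-order term. Suppose $h\neq 1$ and write $h=1+cy^r+O(y^{r+1})$ with $r\ge 1$ and $c\in\mathbb{F}_p^\times$. Expanding binomially,
$$h^m = 1 + m c\, y^r + O(y^{r+1}),$$
because every term involving $(h-1)^k$ with $k\ge 2$ lies in $y^{2r}\mathbb{F}_p[[y]]\subseteq y^{r+1}\mathbb{F}_p[[y]]$. Since $\gcd(m,p)=1$, the element $m$ is a unit in $\mathbb{F}_p$, so $mc\neq 0$. Hence $h^m\neq 1$, a contradiction. Therefore $h=1$ and $f=g$.

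The only point needing care is that the binomial expansion of $(1+(h-1))^m$ is a finite sum, since $m$ is a positive integer, so no convergence issues arise. The argument uses nothing beyond $\gcd(m,p)=1$. I do not expect a genuine obstacle here.

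An alternative route, if one prefers, is to factor $f^m-g^m=(f-g)(f^{m-1}+f^{m-2}g+\dots+g^{m-1})$. The second factor has constant term $m\not\equiv 0\pmod p$, so it is a unit, and $\mathbb{F}_p[[y]]$ is an integral domain. This forces $f=g$ directly, and I would mention it as a one-line check.
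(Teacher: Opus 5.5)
Your proof is correct, but it takes a different and more elementary route than the paper.

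\textbf{The paper's route.} It argues structurally: $G=1+y\mathbb{F}_p[[y]]$ is a pro-$p$ group, and raising to a power prime to $p$ is an automorphism of such a group.

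\textbf{Your route.} You reduce to $h^m=1$ with $h=fg^{-1}$ and read off the lowest-order term, $h^m=1+mc\,y^r+O(y^{r+1})$. In your second version, you note that $\sum_{i=0}^{m-1}f^{m-1-i}g^{i}$ has constant term $m$ and is therefore a unit. Either way, the argument is self-contained.

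\textbf{What your route avoids.} It sidesteps the weak points in the paper's justification. The series $\log(1+g)=g-g^2/2+\cdots$ has denominators divisible by $p$, so it is not defined over $\mathbb{F}_p$. Moreover, $G$ cannot be isomorphic to $(y\mathbb{F}_p[[y]],+)$, which has exponent $p$, because $(1+y)^{p^k}=1+y^{p^k}\neq 1$ for every $k$. For the same reason, the claim that every element of $G$ has $p$-power order is false if read literally.

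\textbf{What the paper's route buys when done correctly.} The sound version of its idea is that $G$ is the inverse limit of the finite groups $G/(1+y^N\mathbb{F}_p[[y]])$, each of order $p^{N-1}$. On each of these, $x\mapsto x^m$ is bijective. This gives bijectivity, including existence of $m$-th roots, which is more than Section~3 needs.

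\textbf{How the two relate.} Your leading-term computation is the concrete content of this filtration argument. On each graded piece $(1+y^r\mathbb{F}_p[[y]])/(1+y^{r+1}\mathbb{F}_p[[y]])\cong\mathbb{F}_p$, the $m$-th power map acts as multiplication by the unit $m$.
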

\begin{proof}
    The group $G = 1 +y\mathbb{F}_p[[y]]$ under multiplication is a pro-$p$ group, every element has order a power of $p$ (this follows because the logarithm map shows $G\cong (y\mathbb{F}_p[[y]], +)$ as a topological groups via $\log(1+g) = g-g^2/2+...$, which in characteristic $p$  is a group of exponent $p^{\infty}$). Since $\gcd(m,p)=1,$ the map $x\to x^m$ is an automorphism of any pro-$p$ abelian group, hence injective on $G$.
\end{proof}
\begin{proof}[Proof of Theorem \ref{compositecase}]The implication $ \mathrm{(ii)}\Rightarrow\mathrm{(i)}$ follows from Propositions \ref{Primecase} and \ref{Primepowercase}. 
We now prove \(\mathrm{(i)}\Rightarrow\mathrm{(ii)}\) by contraposition. 
Suppose \(t\) is not a prime power. Write
$$t=p^am, \quad p \;\text{prime}, \quad a\geq 1,\quad \gcd(p,m)=1, \quad m>1.$$
Since $\gcd(p,m) =1$ and $m\geq 1$, the integer $m$ is not a power of $p$. By Kummer's theorem \cite{Kummer1852}, there exists $k_0$  with $1\leq k_0\leq m-1$ such that 
$$p\nmid \binom{m}{k_0}$$

We will show $A^{t^2} \not \equiv B^t \pmod{p}$ (which implies $F_{t^2}(x) \not \equiv C_t(x)\pmod{p}$ and hence $\pmod t$, since $p\mid t).$
Recall $A= \prod_{n\geq 1}(1-x^n)$ and $B= \prod_{n\geq 1}(1-x^{tn}).$ Applying Lemma \ref{Freshman} iteratively $2a$ times to $A$, we have
$$A^{2a} \equiv A(x^{p^{2a}}) \pmod p, \quad \text{so} \quad A^{t^2}= A^{p^{2a}m^2} = (A^{p^{2a})m^2} \equiv A(x^{p^{2a}})^{m^2} \pmod p.$$
For $B^t = \prod_{n\geq 1}(1-x^{p^a mn})^{{p^a}m}$, each factor satisfies
$$(1-x^{p^a m n})^{p^a m} \equiv (1-x^{2p^a m n})^m \pmod p,$$
Set $y=x^{p^{2a}}$. The congruence $A^{t^2} \equiv B^t \pmod{p}$ becomes
\begin{equation}\label{y^mn}
 \prod_{n\geq 1}(1-y^n)^{m^2} \equiv \prod_{n\geq 1} (1-y^{mn})^m \pmod{p}.   
\end{equation}
Both sides of \eqref{y^mn} lie in $1+y\mathbb{F}_p[[y]]$. Since $\gcd(m,p) =1$, Lemma \ref{Pro-p} says $f^m = g^m \implies f =g$. Writing \eqref{y^mn}
 as $\left(\prod_{n}(1-y^n)^m\right)^m \equiv \left( \prod(1-y^{mn}))\right)^m$, we may cancel the $m$-th power to obtain:
 \begin{equation}\label{eqy^n}
     \prod_{n\geq 1} (1-y^n)^m \equiv \prod_{n\geq 1}(1-y^{mn})\pmod{p}.
 \end{equation}
 We now show that the reduced congruence \eqref{eqy^n} is impossible. Since $m>1$ and $\gcd(m,p) =1,$ it suffices to compare the coefficients of the linear form $y^1$ modulo $p$.\\
 Applying the coefficient extraction operator $[y^1]$ to the left side of \eqref{eqy^n}, we obtain 
 $$[y^1]\prod_{n\geq 1}(1-y^n)^m.$$
For every $n\geq 2$, the factor $(1-y^n)^m = 1-my^n + O(y^n+1)$ has no contribution to the coefficient of $y^1$, because its lowest non-zero degree is $n\geq 2$. Hence, the only factor corresponding to $n=1$ survives. Consequently,
$$[y^1] \prod_{n\geq 1} (1-y^n)^m = [y^1](1-y)^m = -m.$$
On the right-hand side of \eqref{eqy^n}, we have
$$[y^1] \prod_{n\geq 1}(1-y^{mn}).$$
Since $m>1$, every factor $(1-y^{mn})$ has lowest degree at least $m\geq 2$. Thus, the coefficient of $y^1$ is trivially zero:
$$[y^1] \prod_{n\geq 1} (1-y^{mn}) =0.$$
Combining these two evaluations yields
$$[y^1] \left(\prod_{n\geq 1}(1-y^n)^m - \prod_{n\geq 1} (1-y^{mn} )\right) = -m$$
Given that $\gcd(m,p) =1$, we have $m\not\equiv 0\pmod p$, and therefore
this contradicts the congruence in \eqref{eqy^n}. Hence, the congruence \eqref{eqy^n} cannot hold modulo $p$. Tracing back through the substitutions, this shows that the original congruence $A^{t^2} \not \equiv B^t \pmod{p}$ must fail. Consequently, $F_{t^2}(x) \not \equiv C_t(x) \pmod{p}$, establishing the existence of some $n\geq 1$ such that $a_t(n) \not\equiv b_t(n) \pmod t.$
\end{proof}
\section{Precise Location of the First Failure}
\begin{corollary}\label{corofailure}
    Let $t=p^a m$ with $\gcd(p,m)= 1$ and $m>1$. Then 
    $$[x^{p^{2a}}] (F_t^2(x) - C_t(x)) \not\equiv 0\pmod p.$$
    Consequently, the first index $n$ for which $a_t(n) \not\equiv b_t(n) \pmod t$ satisfies $n\leq p^{2a}$.
\end{corollary}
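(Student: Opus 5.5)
The plan is to compute the coefficient directly modulo $p$ rather than go through the cancellation argument of Lemma~\ref{Pro-p}. Write $A=\prod_{n\ge1}(1-x^n)$ and $B=\prod_{n\ge1}(1-x^{tn})$, so that $F_{t^2}(x)=A^{t^2}/A$ and $C_t(x)=B^t/A$. Hence
$$F_{t^2}(x)-C_t(x)=\frac{A^{t^2}-B^t}{A},$$
and since $1/A\in 1+x\mathbb{Z}[[x]]$, it suffices to find the lowest-degree term of the numerator $A^{t^2}-B^t$ modulo $p$.

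\textbf{Step 1: reduce $A^{t^2}$.} Iterate Lemma~\ref{Freshman} $2a$ times; each application is legitimate because $A(x^{p^k})$ has constant term $1$. This gives $A^{p^{2a}}\equiv A(x^{p^{2a}})\pmod p$. Raising to the power $m^2$ and setting $y=x^{p^{2a}}$,
$$A^{t^2}\equiv A(y)^{m^2}=(1-y)^{m^2}\prod_{n\ge2}(1-y^n)^{m^2}\equiv 1-m^2y+O(y^2)\pmod p.$$
In particular, modulo $p$ the series $A^{t^2}$ has no terms in degrees $1,\dots,p^{2a}-1$, and its coefficient of $x^{p^{2a}}$ is $-m^2$.

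\textbf{Step 2: reduce $B^t$.} Apply Lemma~\ref{Freshman} $a$ times to $B$, which gives
$$B^{t}=(B^{p^a})^m\equiv \prod_{n\ge1}(1-x^{p^{2a}mn})^m\pmod p.$$
Every nonconstant term here has degree at least $p^{2a}m$. Since $m>1$, we have $p^{2a}m>p^{2a}$, so $B^t\equiv 1+O(x^{p^{2a}+1})\pmod p$.

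\textbf{Step 3: combine.} By Steps 1 and 2, $A^{t^2}-B^t\equiv -m^2x^{p^{2a}}+O(x^{p^{2a}+1})\pmod p$. Multiplying by $1/A=1+O(x)$ does not change the lowest term, so
$$[x^{p^{2a}}]\bigl(F_{t^2}(x)-C_t(x)\bigr)\equiv -m^2\pmod p.$$
This is nonzero modulo $p$ because $\gcd(m,p)=1$. Moreover, every lower coefficient of the difference vanishes modulo $p$.

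\textbf{Step 4: locate the first failure.} Since $p\mid t$, a discrepancy modulo $p$ is also a discrepancy modulo $t$. Therefore $a_t(p^{2a})\not\equiv b_t(p^{2a})\pmod t$, and the first failing index satisfies $n\le p^{2a}$.

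The only delicate point is Step 2: we must confirm that $B^t$ contributes nothing in degree $p^{2a}$ modulo $p$. This needs the strict inequality $p^{2a}m>p^{2a}$, and that is exactly where the hypothesis $m>1$ enters. We also note that the value obtained this way is $-m^2$. The reduced congruence \eqref{eqy^n} instead suggests $-m$, but that form arises only after cancelling an $m$-th power, so it is not the literal coefficient of $F_{t^2}-C_t$. Either value is nonzero modulo $p$.
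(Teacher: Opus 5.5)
Your proof is correct, and it is more direct than what the paper does. The paper gives no separate proof of Corollary~\ref{corofailure}. It is meant to follow by ``tracing back'' through the proof of Theorem~\ref{compositecase}. That proof reduces modulo $p$ to \eqref{y^mn}, cancels an $m$-th power via Lemma~\ref{Pro-p} to reach \eqref{eqy^n}, and then compares $-m$ with $0$ at $y^1$.

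You skip the cancellation and compare the $y^1$ coefficients of \eqref{y^mn} itself ($-m^2$ versus $0$). You also make explicit two steps the paper leaves unsaid. First, modulo $p$ both $A^{t^2}$ and $B^t$ are series in $x^{p^{2a}}$, with $B^t$ starting only in degree $p^{2a}m$, so nothing survives below degree $p^{2a}$. Second, dividing by $A$, which has constant term $1$, does not move the lowest-order term of $A^{t^2}-B^t$.

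This buys you two things. Lemma~\ref{Pro-p} becomes unnecessary; its justification in the paper, via a logarithm in characteristic $p$, does not work as written, although the lemma itself is true. And you obtain the literal residue $-m^2$ of the coefficient of $x^{p^{2a}}$ in $F_{t^2}-C_t$, which is what the corollary actually asserts. The paper's route would additionally need the fact that $f-g=d\,y^k+O(y^{k+1})$ with $f,g\in1+y\mathbb{F}_p[[y]]$ implies $f^m-g^m=md\,y^k+O(y^{k+1})$. That fact is exactly the $-m$ versus $-m^2$ discrepancy you flag. The paper's own $t=6$, $p=3$ example in fact uses your direct computation, $[y^1](1-y)^4=-4=-m^2$.

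The only tacit assumption in your write-up is $a\ge1$ in Step~4, needed for $p\mid t$, which the paper's setup also makes.
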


\begin{example}[t=6] Let
$$F_{36}(x) = \prod_{n\geq1} (1-x^n)^{36} =\sum_{n\geq 0} a_6(n) x^n, \quad C_6(x) = \prod_{n\geq 1}(1-x^{6n})^6 = \sum_{n\geq 0} b_6(n) x^n.$$
\emph{Perspective 1} $(p=2)$. Write $6=2^1\cdot3$, so $a=1, m=3.$ Corollary \ref{corofailure} predicts failure at $n\leq p^{2a} =4.$ Indeed,
$$a_6(4) = 33320, \quad b_6(4) =5,$$
so $a_6(4) -b_6(4) = 33315 \equiv 3 \pmod 6 \not= 0.$\\
\emph{Perspective 2} $(p=3)$. Write $6=3^1\cdot 2$, so $a=1$, $m=2$. Corollary \ref{corofailure} now predicts failure at $n\leq p^{2a} =9.$ Modulo $3$,
$$F_{36}(x) \equiv \prod_{n\geq 1} (1-x^{9n})^4 \pmod 3,$$
so
$$[x^9]F_{36}(x) \equiv [y^1](1-y)^4 = -4 \equiv 2 \pmod 3.$$
Also,
$$C_6(x) \equiv \prod_{n\geq 1} (1-x^{18n})^2 \pmod 3,$$
hence $[x^9]C_6(x) =0.$ Therefore,
$$[x^9] (F_36(x) - C_6(x)) \equiv 2 \not\equiv 0\pmod 3,$$
so failure occurs at $n=9 \leq9$. Taking the minimum over both primes gives the sharp bound $n\leq 4.$
    
\end{example}

\section{Numerical Verification for $t=3$}

Let
\[
F_9(x)=\prod_{n\ge1}(1-x^n)^9=\sum_{n\ge0}a_3(n)x^n,\qquad
C_3(x)=\prod_{n\ge1}(1-x^{3n})^3=\sum_{n\ge0}b_3(n)x^n.
\]

Table 1 records the coefficients of $F_9(x)$ and $C_3(x)$ for selected $n$, together with their difference.

\[
\begin{array}{c|r|r|r}
\hline
n & a_3(n) & b_3(n) & a_3(n)-b_3(n)  \\ \hline
0 & 1 & 1 & 0  \\
1 & -8 & 1 & -9  \\
2 & 20 & 2 & 18  \\
3 & 0 & 0 & 0  \\
4 & -70 & 2 & -72 \\
5 & 64 & 1 & 63  \\
6 & 56 & 2 & 54  \\
8 & -125 & 1 & -126 \\ \hline
\end{array}
\]

\begin{center}
  \noindent\textbf{Table 1.} Coefficient comparison for $t=3$.  
\end{center}

The computations establish that $a_3(n)-b_3(n)$ is divisible by $9$ for each listed $n$, verifying the congruence $F_9(x)\equiv C_3(x)\pmod{9}$ in this range. This suggests that the prime power case $m=1$ may permit a lift of the congruence to higher $p$-adic orders, in sharp contrast to the non-prime-power case $m>1$, where the obstruction manifests already modulo $p$.

\section{Modular Form Perspective}
The generating functions are modular forms and eta-quotients:
$$C_t(x) = \frac{\prod_{n\geq 1} (1-x^{tn})^t}{\prod_{n\geq 1} (1-x^n)}, \quad F_{t^2}(x) = \prod_{n\geq 1} (1-x^n)^{t^2-1} = \eta(z)^{t^2 -1}|_{x = e^{2\pi iz}}.$$

In the Han-Ono paper \cite{Han-Ono} for $t=3$, 
\begin{itemize}
    \item $F_9(x) = {\eta(z)}^8 \in S_4(SL_2(\mathbb{Z}).$
    \item  $C_3(x) =  {\eta(9z)}^3/ \eta(3z)\in M_1(\Gamma_0(9), \chi).$
\end{itemize}
The congruence $F_{t^2}(x) \equiv C_t(x) \pmod t$ gives a congruence between modular forms of different levels and weights modulo $t$, which for prime $t=p$ is a shadow of the theory of $p$-adic modular forms and Hida families.

\section*{Acknowledgements}
The author would like to thank her advisor, Prof. Ben Kane, with whom she has had helpful discussions.


\begin{thebibliography}{1}\footnotesize

\bibitem{FDD} F. Garvan, D. Kim, and D. Stanton,\newblock{\em Cranks and $t$-cores}, \newblock{Invent. Math.} {\bf 101} (1990), no. 1, 1--17.

\bibitem{GN}  G.-N. Han, \newblock{\em Some conjectures and open problems on partition hook lengths}, \newblock{Experiment. Math.} {\bf 18} (2009), no. 1, 97--106.

\bibitem{GNHan}  G.-N. Han, \newblock{\em The Nekrasov-Okounkov hook length formula: Refinement, elementary proof, extensions and applications}, 
\newblock{ Ann. Inst. Fourier}  {\bf 60}(2010), 1--29.

\bibitem{Han-Ono}G.-N. Han and K. Ono, 
\newblock {\em Hook lengths and $3$-cores},
\newblock {Ann. Comb.} {\bf 15}(2011), 305--312.

\bibitem{Granville-Ono} A. Granville and K. Ono, \newblock{\em Defect zero $p$-blocks for finite simple groups}, \newblock{Trans. Amer. Math. Soc.} {\bf 348} (1996), no. 1, 331--347.

\bibitem{Kummer1852} E. E. Kummer, \newblock{\em Über die Ergänzungssätze zu den allgemeinen Reciprocitätsgesetzen}, \newblock{J. Reine Angew. Math.} {\bf 44} (1852), 93--146.


\bibitem{Nek-Oko}N. A. Nekrasov and A. Okounkov,
\newblock{\em Seiberg-Witten theory and random partitions},
{\em The Unity of Mathematics}, Progr. Math. {\bf 244}, Birkhäuser Boston, 2006,  525--596.



\end{thebibliography}
\end{document}